\documentclass[11pt, preprint]{amsart}
\usepackage{amsfonts,amssymb,amsmath,amsthm,amscd,mathtools,multicol,tikz, tikz-cd, caption, enumerate, mathrsfs, geometry,tgpagella}
\usepackage[foot]{amsaddr}
\usepackage[pagebackref=true, colorlinks, linkcolor=blue, citecolor=red]{hyperref}
\usepackage{fullpage}
\usepackage[all]{xy}
\makeatletter
\def\blfootnote{\gdef\@thefnmark{}\@footnotetext}
\makeatother

\makeindex
\newtheorem{theorem}{Theorem}[section]
\newtheorem{proposition}[theorem]{Proposition}
\newtheorem*{theorem*}{Theorem}

\newtheorem{lemma}[theorem]{Lemma}
\theoremstyle{remark}
\newtheorem{remark}[theorem]{Remark}

\numberwithin{equation}{section}

\title{elementary first integrals  of  integral differential systems}
\author{Chitrarekha Sahu and Varadharaj R. Srinivasan}
\address{Indian Institute of Science Education and Research (IISER) Mohali Sector 81, S.A.S. Nagar, Knowledge City, Punjab 140306, India.}
\email{\url{chitrarekhasahu97@gmail.com},  \url{ravisri@iisermohali.ac.in}}	
\begin{document}
\begin{abstract}    
For a field $F$ of characteristic zero with a derivation $\delta$, we provide a necessary and sufficient condition for a system of differential equations \begin{equation*}\delta y_1=f_0, \quad\delta y_2=f_1,\quad\dots\quad,\delta y_n=f_{n-1},\end{equation*} where $f_0\in F$, $f_1\in F[y_1]$, $\dots$, $f_{n-1}\in F[y_1,\dots,y_{n-1}]$, to have elementary first integrals.
\end{abstract}
\maketitle
%\tableofcontents

%%%%%%%%%%%%%%%%%%%%%%%%%%%%%%%%%%%%%%%%%%%%%%%%%%%%%%%%%%%%%%%%%%%%%%%%%%%%%%%%%%%%%%%%%%%%%%%%%%%%%%%%%%%%%%%%%%%%%%%%%%%%%%%%%%%%%%%%%%%%%%%%%%%%%%%%
%%%%%%%%%%%%%%%%%%%%%%%%%%%%%%%%%%%%%%%%%%%%%%%%%%%%%%%%%%%%%%%%%%%%%%%%%%%%%%%%%%%%%%%%%%%%%%%%%%%%%%%%%%%%%%%%%%%%%%%%%%%%%%%%%%%%%%%%%%%%%%%%%%%%%%%%	
\section{Introduction}\label{introduction}

\subsection{Definitions and terminologies} A field (respectively, a ring) with a finite set of commuting derivations $\Delta$ 
is called a \emph{$\Delta$-field} (respectively, a \emph{$\Delta$-ring}). If $\Delta$ 
consists of a single derivation $\delta$, then the field (respectively, the ring) 
is called a \emph{$\delta$-field} (respectively, \emph{$\delta$-ring}). Throughout this article, $F$ denotes a $\delta$-field of characteristic zero having an algebraically closed \emph{field of constants} $F^\delta:=\{x\in F \ | \ \delta x=0\}$. A $\delta$-field $E$ is said to be a \emph{$\delta$-extension} of $F$ (or equivalently, $F$ is said to be a \emph{$\delta$-subfield} of a $\delta$-field $E$) if $E$ is a field extension of $F$ and the derivation on $E$ restricts to the derivation on $F$.

Let $F(Y):=F(y_1,\dots,y_n)$ be the rational function field in $n$ variables over $F$ having a set of derivations $\Delta = \{ \delta_0, \delta_{1}, \dots, \delta_{n} \}$ on $F(Y)$ such that for each $i,j\in\{1,\dots, n\}$
\begin{enumerate}[(1)]
    \item $\delta_0(x)=\delta(x)$ for all $x\in F$  and $\delta_0y_j = 0$.
    \item $\delta_{i}x=0$ for all $x\in F,$ $\delta_{i}y_j=0$ if $i\neq j$ and $\delta_{i}y_i=1$.
\end{enumerate} 
It is easily seen that the set of constants $F(Y)^\Delta:=\{x\in F(Y)\ | \ \delta_i x=0\ \text{for all } \delta_i\in \Delta\}$ forms a subfield of $F(Y)$. The elements of $F(Y)^\Delta$ (respectively $F^\delta$) are sometimes called \emph{$\Delta$-constants} (respectively \emph{$\delta$-constants}). A field extension $E$ of $F(Y)$ is said to be an \emph{elementary $\Delta$-extension} of $F(Y)$ if each $\delta_i\in \Delta$ can be extended to a derivation of $E$ such that  $E^\Delta=F(Y)^{\Delta}$ and that there are elements $t_1,\dots,t_m\in E$ such that $E=F(Y)(t_1,\dots,t_m)$ and for each $j=1,2,\dots,m,$ there exists an element $a_j\in F(Y)(t_1,\dots,t_{j-1})$ with the property that either $t_j$ is algebraic over $F(Y)(t_1,\dots,t_{j-1})$ or $\delta_i t_j=(\delta_i a_j)t_j$ for all $i=0,1,2,\dots,n$ or $a_j$ is nonzero and  $\delta_i t_j=\frac{\delta_i a_j}{a_j}$ for all $i=0,1,2,\dots,n$.

Consider the derivation $$\delta:=\delta_0+f_1\delta_{1}+\cdots+f_n\delta_{n}$$ on $F(Y)$,  where $f_1,\dots,f_n\in F(Y)$.
An element $u\in F(Y)$ is called  an \emph{elementary $\delta$-integral} of $F$  if $F(Y)^\delta=F^{\delta}$ and there are  an element $v_0\in F,$  nonzero elements $v_1,\dots, v_n\in F$ and  constants $c_1,\dots, c_n\in F^{\delta}$  such that  $\delta u\in F$ and that $$\delta u=c_1\frac{\delta v_1}{v_1}+\cdots+c_n\frac{\delta v_n}{v_n}+\delta v_0.$$

Let $E$ be an elementary $\Delta$-extension of $F(Y).$   An element $e\in E^\delta$ and $e\notin F^{\delta}$ is called an \emph{elementary first $\delta$-integral} of the system   \begin{equation}\label{D-system}\delta y_1=f_1,\, \delta y_2=f_2, \,\dots,\,\delta y_n=f_n.\end{equation}

\subsection{Statement of the main theorem}For elements $f_0\in F,$ $f_1\in F[y_1],$ $\dots, f_{n-1}\in F[y_1,\dots,y_{n-1}],$ consider the derivation \begin{equation}\label{ADE-Derivation}
\delta := \delta_0+f_0 \delta_{1} + f_1\delta_{2}\cdots + f_{n-1}\delta_{n}
\end{equation}
and the first order system\begin{equation}\label{rationalsystem} \delta  y_1=f_0, \ \delta y_2=f_1,\cdots, \ \delta y_{n}=f_{n-1}.\end{equation}

\begin{theorem}\label{intromain} Suppose that  $F(Y)^{\delta}=F^{\delta}.$ Then, the system (\ref{rationalsystem}) admits an elementary first$\delta$-integral if and only if there is an elementary $\delta$-integral $u\in F(Y)$ of $F$ and that $u\notin F$.
\end{theorem}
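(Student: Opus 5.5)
The plan is to prove the two directions separately, with a Liouville-type structure theorem for elementary $\Delta$-extensions doing most of the work in the hard direction.

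\emph{Sufficiency.} Suppose $u\in F(Y)\setminus F$ is an elementary $\delta$-integral of $F$, so
$$\delta u=\sum_i c_i\frac{\delta v_i}{v_i}+\delta v_0,$$
with $v_i\in F$ and $c_i\in F^\delta$. Since $v_0,v_i\in F$, the derivations $\delta_1,\dots,\delta_n$ kill them, and $\delta_0=\delta$ on $F$.

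First adjoin to $F(Y)$ logarithms $t_i$ with $\delta_0t_i=\delta v_i/v_i$ and $\delta_jt_i=0$ for $j\ge 1$. These are log elements in the sense of the definition, with $a_i=v_i$. Then set
$$e=u-\sum_i c_it_i-v_0 .$$
A direct computation gives $\delta e=0$.

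Two things remain.
\begin{enumerate}[(a)]
\item We need $E^\Delta=F(Y)^\Delta$. The choice of $t_i$ can be made minimal: if some $\delta v_i/v_i$ is already a $\delta$-derivative in $F$ (or in the previous field), rewrite it and drop that $t_i$. The standard lemma that adjoining a log with no new constants keeps the constants then applies.
\item We need $e\notin F^\delta$. Here note that $\delta_je=\delta_ju$ for $j\ge1$, and $u\notin F$ forces some $\delta_ju\neq0$. Hence $e$ is not a constant.
\end{enumerate}

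\emph{Necessity.} Let $e\in E^\delta\setminus F^\delta$ with $E$ an elementary $\Delta$-extension of $F(Y)$. I will reduce to a statement about $F(Y)$ via a Liouville--Ostrowski type argument for the commuting derivations $\Delta$. Since $E^\Delta=F(Y)^\Delta$, we have $\delta_je\neq0$ for some $j$.

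The key structural step is to show that some nonconstant $\delta$-constant of $E$ forces an element $w\in F(Y)$ and logarithmic data with
$$\delta w=\sum_i c_i\frac{\delta g_i}{g_i}$$
for $g_i\in F(Y)$. The strategy is to follow the tower $F(Y)(t_1,\dots,t_m)$ and use the triangular form of $\delta$: each $f_{k}$ depends only on $y_1,\dots,y_k$. The idea is to exploit the fact that $\delta$ preserves the filtrations $F[y_1,\dots,y_k]$.

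Concretely, I would argue by induction on $n$.
\begin{enumerate}[(1)]
\item Consider $K=F(y_1,\dots,y_{n-1})$, which is $\delta$-stable, with $\delta y_n=f_{n-1}\in K$. So $y_n$ is an antiderivative over $K$.
\item If $K(y_n)$ admits an elementary first integral but $K$ does not, apply Liouville's theorem relative to $K$. This should give $f_{n-1}=\delta(\text{elem. over }K)$. Combined with $F(Y)^\delta=F^\delta$, we get an expression $\delta(y_n-h)=\sum c_i\,\delta g_i/g_i$ with $h,g_i\in K$.
\item Otherwise, apply induction to $K$.
\end{enumerate}

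Finally, I must push $\delta u$ down into $F$. The tool here is degree arguments in the polynomial ring $F[y_1,\dots,y_k]$ together with the polynomiality of the $f_k$.
\begin{enumerate}[(i)]
\item Write each $g_i$ as a product of irreducibles in $F[Y]$. An irreducible $p$ that is not $\delta$-invariant, i.e.\ with $p\nmid\delta p$, would produce poles on the left side that cannot cancel. So each such $p$ must satisfy $\delta p=\lambda p$.
\item $\delta$ lowers no degree except through the $f$'s, and $F(Y)^\delta=F^\delta$. From this, the Darboux polynomials should be shown to lie in $F$. Each $\delta p/p$ then lies in $F$.
\item Comparing parts, $w$ (up to adjusting $h$) has $\delta w\in F$ of the required form.
\end{enumerate}
This yields the elementary $\delta$-integral $u$. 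It satisfies $u\notin F$ because it involves $y_n$ or the inductive variable.

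\emph{Main obstacle.} I expect the hardest step to be showing that the Darboux polynomials of this triangular derivation lie in $F$, so that the logarithmic parts descend to $F$. I would try to prove this first by induction on the highest variable appearing in $p$, using the leading coefficient in that variable and the hypothesis $F(Y)^\delta=F^\delta$.
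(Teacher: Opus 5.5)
There is a genuine gap in the necessity direction, exactly at what you call the key structural step. Your step (2) invokes Liouville's theorem relative to $K=F(y_1,\dots,y_{n-1})$, but its hypotheses are not available. Liouville's theorem concerns an element of $K$ that has an antiderivative in a $\delta$-elementary extension of $K$ with the \emph{same} constants. Here that fails on two counts:
\begin{enumerate}
\item The generators $t_j$ of $E$ are elementary over $K(y_n)$, not over $K$, since the $a_j$ may involve $y_n$. Moreover, $y_n$ itself is only a primitive over $K$ and is not known to be elementary.
\item $E^\delta\supsetneq F^\delta$ by assumption: the first integral $e$ is precisely a new $\delta$-constant.
\end{enumerate}
Likewise, the dichotomy ``$K$ admits an elementary first integral or not'' is a statement about $\Delta'$-elementary extensions of $K$, where $\Delta'=\{\delta_0,\dots,\delta_{n-1}\}$. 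Nothing in your sketch transfers the given $\Delta$-tower over $K(y_n)$, which is also constrained by $\delta_n$, to such an extension. The missing ingredient is the theorem of Prelle and Singer \cite{MR704611}, which the paper uses as a black box. From $E^\Delta\subsetneq E^\delta$ it produces $v_j,v$ \emph{algebraic} over $F(Y)$ and constants $c_j$ with $\sum_j c_j\delta v_j/v_j+\delta v=0$, but $\sum_j c_j\delta_i v_j/v_j+\delta_i v\neq 0$ for some $i$.

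Even granting that theorem, your claim that the data can be taken in $F(Y)$ needs an argument. Norms and traces preserve the $\delta$-relation, but nothing guarantees that the $\delta_i$-nontriviality survives. The paper handles this in three moves. It takes the least $l$ such that everything is algebraic over $F(y_1,\dots,y_l)$. It then applies Rosenlicht's theorem \cite{Ros76} to get $v=cy_l+z$ with $c\neq 0$ and $z,v_j$ algebraic over $F_{l-1}$. Only then does it take traces, using $\delta_l\mathrm{Tr}(v)=mc\neq 0$.

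The rest of your proposal is sound. Sufficiency is essentially the paper's construction, and your check $\delta_j e=\delta_j u\neq 0$ is a cleaner proof that $e\notin F^\delta$. In (a), though, the elimination criterion must be the $\Delta$-version: some $h$ in the previous field with $\delta_0 h=\delta v_i/v_i$ and $\delta_j h=0$ for $j\ge 1$. Replacing $t_i$ by an $h$ satisfying merely $\delta h=\delta v_i/v_i$ could destroy $\delta_j e=\delta_j u$.

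Your Darboux-polynomial descent is a valid and more elementary substitute for Lemma~\ref{Induction-step3}, once a nontrivial relation with $w,g_i\in F(Y)$ is in hand. Use lex order with $y_n>\dots>y_1$. Triangularity gives $\delta(ay^\alpha)=\delta(a)y^\alpha+(\text{lower terms})$, so $\delta p=\lambda p$ forces $\lambda=\delta(\mathrm{lc}(p))/\mathrm{lc}(p)\in F$. Hence $p/\mathrm{lc}(p)\in F(Y)^\delta=F^\delta$, so $p\in F$. With the $c_i$ made $\mathbb Q$-linearly independent, comparing pole orders then forces every $g_i\in F$ and $\delta w\in F$. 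This only replaces the final descent step, however; it does not supply the missing step above.
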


\subsection{Some remarks on the main Theorem} 
Our theorem is an analogue of a result of Prelle and Singer \cite[Corollary 2]{MR704611}, wherein the case  $f_0,f_1,\dots, f_{n-1}\in F$ is handled (See Remark \ref{Singersresult}) and the proof of our Theorem  uses  \cite[Theorem]{MR704611}. In the language of Model Theory of Differential Fields, the above theorem can be rephrased as follows:

\begin{itemize}
    \item[] \emph{A weakly orthogonal generic type\footnote{The differential field generated by $F$ and a generic solution of the system \eqref{rationalsystem}  has $F^\delta$ as its field of constants, that is, $F(Y)^\delta=F^{\delta}$. Equivalently, the system \eqref{rationalsystem} has no rational first $\delta$-integrals.} described by \eqref{rationalsystem} admits an elementary first $\delta$-integral if and only if it admits an elementary $\delta$-integral.} 
\end{itemize}

We now describe a connection between system \eqref{rationalsystem} and a specific class of Picard-Vessiot $\delta$-extensions of $F$.  Let  $E$ be a Picard-Vessiot $\delta$-extension  of $F$  such that the differential Galois group is isomorphic to a unipotent algebraic group. Then every intermediate $\delta$-subfield $L$ is a solution field (in the sense of  Yves Andr\'{e} \cite{andre2014solution}) and there are $F$-algebraically independent elements $y_1,\dots, y_n\in L$ such that $L=F(y_1,\dots,y_n),$ $\delta y_1\in F$ and for each $i=2, 3,\dots,n,$  $\delta y_i\in F[y_1,\dots, y_{i-1}]$ (See \cite[Theorem 4.4]{KRS24}). Conversely, suppose that $F(Y)$ has a derivation $\delta$ as in \eqref{ADE-Derivation} and that $F(Y)^{\delta }$ $=F^\delta$. It is easily seen that the $\delta$-ring $F[y_1]$ consists of elements holonomic over $F$. In fact, $F[y_1]$ is the Picard-Vessiot $F$-algebra of the Picard-Vessiot $\delta$-extension $F(y_1)$ of $F$. Since $\delta y_2\in F[y_1]$, it follows that $y_2$, and consequently, every element of the ring $F[y_1,y_2]$, is holonomic over $F$. Iteratively, we see that  $F[y_1,\dots, y_n]$ consists of elements holonomic over $F$. Therefore, $F(Y)$ is a solution field. From \cite[Theorem 1.2.2]{andre2014solution}, $F(Y)$ can be embedded in a Picard-Vessiot $\delta$-extension $E$ of $F$.  Let $\mathscr G$ be the differential Galois group of $E$ over $F$ and $K$ be the compositum of all the images of $F(Y)$ under the elements of $\mathscr G$. Then, $K$ is readily seen to be a Picard-Vessiot $\delta$-extension of $F$. Let $S:=\{\sigma(y_i)\ | \ \sigma\in \mathscr G, 1\leq i\leq n\}$. Then $K=F(S)$ and we can find elements $x_1,\dots,x_m\in S$ such that $K=F(x_1,\dots, x_m)$, $\delta x_1\in F$ and $\delta x_i\in F(x_1,\dots, x_{i-1})$ for $i=2,\dots, n$. It is well-known that such a Picard-Vessiot extension $K$ of $F$ has a unipotent differential Galois group \cite[Sections 24 and 25]{kolchin1948algebraic}. 

If the $\Delta$-field $F(Y)$ contains an elementary $\delta$-integral of $F$ that is not in $F$ then, using  the proof of the our Theorem, one can  construct an elementary first $\delta$-integral. Through a simple example, we shall now illustrate this construction. Consider the rational function field $\mathbb C(x)$ over the complex numbers with the derivation $\delta =d/dx$ and the rational function field $\mathbb C(x)(y_1,y_2)$ with the set of derivations $\Delta=\{\delta_0, \delta_1, \delta_2\},$ where $\delta_1$ and $\delta_2$ are, as defined earlier, partial derivations with respect to $y_1$ and $y_2$ respectively.  Let $$\delta = \delta_0-\frac{y_2}{x}\delta_1-\frac{1}{1-x}\delta_2.$$ Then, it can be shown that $\mathbb C(x)(y_1,y_2)^{\delta }=\mathbb C$. In fact, the $\delta$-field $\mathbb C(x)(y_1,y_2)$ is isomorphic to the differential field generated by the dilogarithm $\int \frac{\log(1-x)}{x}dx$ over $\mathbb C(x).$  Let $E=\mathbb C(x)(y_1,y_2)(y)$, where $y$ is transcendental over $\mathbb C(x)(y_1, y_2)$ and extend the derivations of $\Delta$ to $E$ using the rule $$\delta_i y=\frac{\delta_i(1-x)}{1-x},\quad\text{for } i=0,1,2.$$
Then, clearly, $E$ is an elementary $\Delta$-extension of $\mathbb C(x)(y_1,y_2).$ Note that $\delta y=-1/(1-x),$ $\delta_1y=\delta_2y=0,$ $E^\Delta=\mathbb C(x)^\delta=\mathbb C$ and that $$\delta (y-y_2)=\frac{-1}{1-x}+\frac{1}{1-x}=0.$$ 

It is worthy to observe that every differential system of the form \eqref{rationalsystem} defined over $\mathbb C(x)$  admits either a rational first $\delta-$integral or an elementary first $\delta$-integral. Indeed, since $\delta y_1\in \mathbb C(x)$, either $\mathbb C(x)(y_1)$ contains a constant not in $\mathbb C$ or $y_1$ must be an elementary $\delta$-integral\footnote{There exist an element $G\in \mathbb C(x)$, complex numbers $\alpha_1,\dots,\alpha_m, c_1,\dots,c_m$ such that  $c_i\neq 0$ for at least one $i$, $\alpha_i\neq \alpha_j$ for all $i\neq j$ and that $y_1=G+\sum^m_{i=1}c_i\log(x-\alpha_i)$.} of $\mathbb C(x).$ In the latter case, our theorem  guarantees an elementary first $\delta$-integral. In the last section, we shall provide a family of differential systems that admit no elementary first $\delta $-integrals.
%%%%%%%%%%%%%%%%%%%%%%%%%%%%%%%%%%%%%%%%%%%%%%%%%%%%%%%%%%%%%%%%%%%%%%%%%
%%%%%%%%%%%%%%%%%%%%%%%%%%%%%%%%%%%%%%%%%%%%%%%%%%%%%%%%%%%%%%%%%%%%%%%%%
%%%%%%%%%%%%%%%%%%%%%%%%%%%%%%%%%%%%%%%%%%%%%%%%%%%%%%%%%%%%%%%%%%%%%%%%%
%%%%%%%%%%%%%%%%%%%%%%%%%%%%%%%%%%%%%%%%%%%%%%%%%%%%%%%%%%%%%%%%%%%%%%%%%
%%%%%%%%%%%%%%%%%%%%%%%%%%%%%%%%%%%%%%%%%%%%%%%%%%%%%%%%%%%%%%%%%%%%%%%%%
\section{Iterated and generalized iterated integrals}
%%%%%%%%%%%%%%%%%%%%%%%%%%%%%%%%%%%%%%%%%%%%%%%%%%%%%%%%%%%%%%%%%%%%%%%%%
%%%%%%%%%%%%%%%%%%%%%%%%%%%%%%%%%%%%%%%%%%%%%%%%%%%%%%%%%%%%%%%%%%%%%%%%%
%%%%%%%%%%%%%%%%%%%%%%%%%%%%%%%%%%%%%%%%%%%%%%%%%%%%%%%%%%%%%%%%%%%%%%%%%

In this section, we will only deal with fields equipped with a single derivation $\delta$. All $\delta$-extensions of $F$ are assumed to have $F^\delta$ as its field of constants. An element $\eta$ in a $\delta$-extension of $F$ is called a \emph{generalized iterated integral} of $F$ if there is a natural number $r$ and nonzero elements $a_1,\dots,a_r\in F$  such that 
\begin{equation}\label{gii-defn}a_1\delta (a_2\delta (\dots(a_r\delta (y))))=0.\end{equation}
A generalized iterated integral of $F$ is called an \emph{iterated integral} of $F$ if each $a_i$ can be chosen to be $1,$ that is, $\delta^r \eta\in F.$ An iterated integral is called an \emph{integral} of $F$ if $r=1,$ that is, $\delta \eta\in F.$

In the next proposition, it is shown that if $F$ has an element whose derivative is $1$ and $\eta\in E$ is an iterated integral of $F,$ then $F\langle \eta\rangle$, the $\delta$-field  generated over $F$ by $\eta$ and all its derivatives, is a field generated over $F$ by integrals of $F.$ Furthermore, these integrals can in turn be described in terms of $\eta$ and its derivatives.

    \begin{proposition}\label{integral_extension}
        Suppose that $F$ has an element $x$ such that $\delta  x=1.$ Let $\eta$ be an iterated integral of $F$
         with $\delta ^{r}\eta=f\in F$ and for $j=0,1,2,\dots,r-1,$ let
         \begin{equation}\label{w_idescription}w_{j+1}=\sum^j_{i=0}(-1)^i\frac{x^i}{i!}\delta^{r-(j+1-i)}\eta.\end{equation}
         Then, for each $j=0,1,2,\dots,r-1,$ \begin{equation}\label{derivativeofw_{j+1}}\delta w_{j+1}=\frac{(-1)^jx^j}{j!}f\in F\end{equation} and that  
         $F\langle \eta\rangle=F(w_1,\dots,w_{r}).$
    \end{proposition}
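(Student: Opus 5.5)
We split the statement into two parts: first the derivative formula \eqref{derivativeofw_{j+1}}, then the equality of fields $F\langle \eta\rangle=F(w_1,\dots,w_r)$.

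\textbf{The derivative formula.} We differentiate \eqref{w_idescription} term by term, using $\delta x=1$ and the Leibniz rule. This gives
\[
\delta w_{j+1}=\sum_{i=1}^{j}(-1)^i\frac{x^{i-1}}{(i-1)!}\delta^{r-(j+1-i)}\eta+\sum_{i=0}^{j}(-1)^i\frac{x^i}{i!}\delta^{r-(j-i)}\eta .
\]
In the first sum we reindex by $i\mapsto i+1$. It becomes $-\sum_{i=0}^{j-1}(-1)^{i}\frac{x^{i}}{i!}\delta^{r-(j-i)}\eta$, which cancels the terms $i=0,\dots,j-1$ of the second sum. Only the $i=j$ term of the second sum survives, namely $(-1)^j\frac{x^j}{j!}\delta^r\eta=(-1)^j\frac{x^j}{j!}f$. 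This lies in $F$, since $x\in F$ and $f\in F$. The computation is a routine telescoping and I do not expect any difficulty here.

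\textbf{The field equality.} Since $\delta^r\eta=f\in F$, we have $F\langle\eta\rangle=F(\eta,\delta\eta,\dots,\delta^{r-1}\eta)$.

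For the inclusion $F(w_1,\dots,w_r)\subseteq F\langle\eta\rangle$: each $w_{j+1}$ is an $F$-linear combination of $\delta^{r-1}\eta,\dots,\delta^{r-1-j}\eta$, with coefficients $(-1)^ix^i/i!\in F$. Note that $r-(j+1-i)$ ranges from $r-1-j$ to $r-1$.

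For the reverse inclusion, we read \eqref{w_idescription} as a linear system. The term $i=0$ of $w_{j+1}$ is exactly $\delta^{r-1-j}\eta$, with coefficient $1$. All other terms involve $\delta^{k}\eta$ with $k>r-1-j$. So the transition matrix from $(\delta^{r-1}\eta,\dots,\delta^{0}\eta)$ to $(w_1,\dots,w_r)$ is unitriangular with entries in $F$. We can therefore solve inductively:
\[
\delta^{r-1}\eta=w_1,\qquad \delta^{r-1-j}\eta=w_{j+1}-\sum_{i=1}^{j}(-1)^i\frac{x^i}{i!}\delta^{r-(j+1-i)}\eta .
\]
By induction on $j$, the right-hand side lies in $F(w_1,\dots,w_{j+1})$. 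Hence every $\delta^k\eta$ with $0\le k\le r-1$ lies in $F(w_1,\dots,w_r)$, which gives $F\langle\eta\rangle\subseteq F(w_1,\dots,w_r)$.

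The only point requiring care is the index bookkeeping in the telescoping sum; there is no conceptual obstacle.
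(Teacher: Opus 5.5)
Your proposal is correct and follows essentially the same route as the paper: the same term-by-term differentiation with the reindexed first sum cancelling all but the $i=j$ term, and the same recursive back-substitution ($w_1=\delta^{r-1}\eta$, $w_2+xw_1=\delta^{r-2}\eta$, and so on) to recover each $\delta^{k}\eta$ from the $w_j$. You also spell out the easy inclusion $F(w_1,\dots,w_r)\subseteq F\langle\eta\rangle$ and the identity $F\langle\eta\rangle=F(\eta,\delta\eta,\dots,\delta^{r-1}\eta)$, which the paper leaves implicit.
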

    
 \begin{proof} Taking derivative of $w_{j+1},$ we obtain \begin{align*}\delta w_{j+1}&=\sum^j_{i=1}(-1)^i\frac{x^{i-1}}{(i-1)!}\delta^{r-(j+1-i)}\eta+\sum^j_{i=0}(-1)^i\frac{x^i}{i!}\delta^{r-(j-i)}\eta\\ &=\sum^{j-1}_{i=0}(-1)^{i+1}\frac{x^{i}}{i!}\delta^{r-(j-i)}\eta+\sum^j_{i=0}(-1)^i\frac{x^i}{i!}\delta^{r-(j-i)}\eta\\ &= (-1)^j\frac{x^j}{j!}f.\end{align*}

  For $j=2,\dots,r,$ \begin{equation}\label{formulaw_j}w_j-\sum^{j-1}_{i=1}(-1)^i\frac{x^i}{i!}\delta^{r-(j-i)}\eta=\delta^{r-j}\eta.\end{equation}
Note that $w_1=\delta^{r-1}\eta,$ $w_2+xw_1=\delta^{r-2}\eta$ and recursively, we obtain for each $j=3,\dots,r,$ that $\delta ^{r-j}\eta\in F(w_1,\dots,w_j).$ Thus, $F\langle \eta\rangle=F(w_1,\dots,w_r).$\end{proof}

%\sum^{j-1}_{i=1}\frac{x^i}{i!}w_{j-i}
	For a later use, we also record the following lemma of Rosenlicht \cite{Ros68}.
	\begin{lemma}\label{Rosenlicht-lemma}
		Let $\eta\notin F$ be an element in a  $\delta$-extension of $F$ and let $\delta \eta=\alpha \in F$. Let $v_1,\dots,v_r\in {F(\eta)^*},v\in {F(\eta)}$ and $\mathbb Q$-linearly independent constants $c_1,\dots,c_r\in F^{\delta}$ such that 
		$$\sum_{i=1}^{r}c_i\frac{\delta v_i}{v_i}+\delta v=P(\eta)\in F[\eta]$$ 
		is of degree $s$ in $\eta$ then each $v_i\in F$ and $v\in F[\eta]$ is a polynomial of degree $\leq s+1$.
		%and the coefficient of $\eta^{s+1}$ in $\sum_{i=1}^{r}c_i\frac{v_i'}{v_i}+v'$ is in $C$. 
	\end{lemma}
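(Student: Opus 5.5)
This is Rosenlicht's lemma, which the paper records from \cite{Ros68} as a tool and does not prove. If I had to prove it myself, I would use the standard partial-fraction and valuation argument in the single variable $\eta$ over $F$.

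\textbf{Step 1: $\eta$ is transcendental over $F$.} Suppose $\eta$ were algebraic over $F$ with minimal polynomial of degree $m\ge 2$ (degree $1$ is excluded since $\eta\notin F$). Differentiating the minimal polynomial shows that $\eta+\frac{1}{m}(\text{coefficient of }\eta^{m-1})$ is a constant of the extension. Since constants of $\delta$-extensions are assumed to be $F^\delta$, this forces $\eta\in F$, a contradiction. So $F(\eta)$ is a rational function field, and I may assume each $v_i$ is an irreducible polynomial or a nonzero element of $F$.

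\textbf{Step 2: each $v_i$ lies in $F$.} Write the logarithmic part as $\sum c_i\,\delta v_i/v_i$. Fix a monic irreducible $p\in F[\eta]$ of positive degree. Then $\delta p=p'(\eta)\alpha+p^{\delta}(\eta)$ has degree less than $\deg p$, so $p\nmid\delta p$.

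If $p$ occurs in the partial fraction expansion of $v$ with highest pole order $k\ge1$, then $\delta v$ has a pole of order exactly $k+1$ at $p$. The logarithmic terms contribute at most simple poles at $p$. Since $P(\eta)$ has no poles, $v$ has no poles, i.e. $v\in F[\eta]$.

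Next look at the simple-pole part at $p$. Let $n_i=\mathrm{ord}_p v_i$. The residue of $\sum c_i\,\delta v_i/v_i$ at $p$ is $\left(\sum c_i n_i\right)\delta p/p$ modulo terms regular at $p$. Because $p\nmid\delta p$, this residue must vanish, so $\sum c_in_i=0$. The $c_i$ are $\mathbb Q$-linearly independent, hence all $n_i=0$. Since $p$ was arbitrary, each $v_i$ has no irreducible factor of positive degree, so each $v_i\in F$.

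\textbf{Step 3: the degree bound.} Now $\sum c_i\,\delta v_i/v_i\in F$, so $\delta v$ equals $P(\eta)$ up to an element of $F$. Write $v=\sum_{k=0}^{d} b_k\eta^k$ with $b_d\neq0$ and $d\ge s+2$. The coefficient of $\eta^d$ in $\delta v$ is $\delta b_d$, and the coefficient of $\eta^{d-1}$ is $\delta b_{d-1}+d\,b_d\alpha$. Both must vanish because $d-1>s$.

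From the first, $b_d$ is a nonzero constant. The second then gives $\delta\!\left(\eta+\frac{b_{d-1}}{d\,b_d}\right)=0$. The constants of the extension are $F^\delta\subset F$, so $\eta\in F$, a contradiction. Hence $d\le s+1$.

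\textbf{Main obstacle.} The delicate step is Step 2, the residue comparison. It needs $p\nmid\delta p$ for every irreducible $p$ of positive degree; this holds because $\delta$ preserves $F[\eta]$ and lowers the degree of the leading term, as $\delta\eta\in F$. It also needs the $\mathbb Q$-linear independence of the $c_i$, which is exactly what turns the single relation $\sum c_in_i=0$ into $n_i=0$ for all $i$.
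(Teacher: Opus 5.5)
Your proof is correct and has the same two-stage structure as the paper's. The paper simply cites Rosenlicht's lemma \cite{Ros68} for $v_i\in F$, $v\in F[\eta]$, and \cite[Proposition 2.2(c)]{KS19} for the degree bound. Your Steps 1--2 are the standard pole-order and residue proof of the first fact, and your Step 3 is the leading-coefficient argument behind the second. Step 1 also makes explicit the transcendence of $\eta$, which the citation leaves implicit in the standing assumption that $\delta$-extensions have no new constants. What you gain is a self-contained argument that shows where each hypothesis is used.

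One small repair is needed. In Step 2 you infer $p\nmid\delta p$ from $\deg\delta p<\deg p$, but this also requires $\delta p\neq 0$. That holds because otherwise $p\in F(\eta)^\delta=F^\delta\subset F$, which is impossible for $p$ of positive degree in the transcendental $\eta$. This is exactly where the no-new-constants hypothesis enters Step 2. Without it, taking $\alpha=\delta f$ with $f\in F$ and $v=1/(\eta-f)$ gives $\delta v=0\in F[\eta]$ but $v\notin F[\eta]$.

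Also drop the remark that each $v_i$ may be taken to be irreducible or in $F$. Regrouping factors replaces the $c_i$ by integer combinations that need not be $\mathbb Q$-linearly independent. Your residue argument with $n_i=\mathrm{ord}_p v_i$ works for arbitrary $v_i$ anyway, so the reduction is not needed.
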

    \begin{proof}
 From \cite[Lemma]{Ros68} we get that each $v_i\in F$ and $v\in F[\eta].$ Now, it follows from \cite[Proposition 2.2 (c)]{KS19} that degree of $v$ is  $s$ or $s+1.$ 
\end{proof}

If $\eta$ is a generalized iterated integral of $F$ then it can be shown (See \cite[Remark 2.7]{SSS25}) that the $\delta$-field generated by $\eta$ and $F$ is of the form $F(w_1,\dots, w_n),$ where $w_1,\dots, w_n$ are $F$-algebraically independent over $F,$ $\delta w_1\in F$ and for $i=2,3,\dots,n,$ $\delta w_i\in F[w_1,\dots, w_{i-1}]$. From \eqref{gii-defn}, it is clear that if $\eta$ is a generalized iterated integral of $F$ and $F\langle \eta\rangle$ has $F^\delta$ as its field of constants, then $F\langle \eta\rangle$ is a $\delta$-subfield of a Picard-Vessiot $\delta$-extension  of $F$ with a unipotent algebraic group as the differential Galois group.

\sloppy Let $E$ be a Picard-Vessiot $\delta$-extension  of $F.$ The Picard-Vessiot $F$-algebra of $E$ is the $F$-subalgebra of $E$ consisting of all solutions in $E$ of linear homogeneous differential equations over $F$ with respect to $\delta .$ This $F$-algebra is known to be finitely generated over $F$ and it is also a $\delta$-ring. Suppose that the differential Galois group of $E$ over $F$ is isomorphic to a unipotent algebraic group.
Then, there are $F$-algebraically independent elements $y_1,\dots,y_n\in E$ such that the Picard-Vessiot $F$-algebra of $E$ is the ring $F[y_1,\dots,y_n]$, where $\delta y_1\in F$ and for each $i=2,3,\dots, n,$ $\delta y_i\in F[y_1,\dots,y_{i-1}]$ (See \cite[Theorem 4.4]{KRS24}). Furthermore, any differential field $K$ intermediate to $F$ and $E$ is the field of fractions of the ring $K\cap F[y_1,\dots,y_n]$.

We shall now show that every element of the Picard-Vessiot $F$-algebra $F[y_1,\dots,y_n]$ is a generalized iterated integral of $F$. Fix a monomial ordering on the (polynomial) ring $F[y_1,\dots,y_n].$ For any element $y\in F[y_1,\dots,y_n],$ write $$y=a_ry^{r_1}_1y^{r_2}_2\dots y^{r_n}_n + \text{ lower order terms},$$ where $a_r\neq 0.$ As $\delta y_i\in F[y_1,\dots,y_{i-1}],$  we observe that $a^{-1}_r\delta y$ is of lower order. Repeating this process we obtain a natural number $r$ and nonzero elements $a_1,\dots, a_r$ such that    
$a_1\delta (a_2\delta (\dots(a_r\delta (y))))=0.$ Thus, elements of the Picard-Vessiot $F$-algebra of $E$ are generalized iterated integrals of $F.$ For more details, we refer the reader to \cite{SSS25}.

\section{elementary first $\delta$-integrals to elementary $\delta$-integrals}

In this section, we prove our main theorem. We will retain all the notations and terminologies from the earlier sections.

%%%%%%%%%%%%%%%%%%%%%%%%%%%%%%%%%%%%%%%%%%%%%%%%%%%%%%%%%%%%%%%%%%%%%%%%%%%%%%%%%%%%%%%%%%%%%%%%%%%%
\begin{lemma}\label{Induction-step3} Let $F(x_1,x_2)$ be a rational function field and a $\delta $-extension of $F$ such that $F(x_1,x_2)^{\delta }=
F^\delta$, $\delta x_2\in F[x_1]$ and $\delta x_1\in F$. Suppose that there are nonzero elements $v_1,\dots,v_r\in F(x_1,x_2)$, $\mathbb Q$-linearly independent constants $c_1,\dots,c_r$ and an element $z\in F(x_1,x_2)$ such that  $$\sum_{j=1}^{r}c_j\frac{\delta v_j}{v_j}+\delta z\in F[x_1,x_2].$$
Then, $v_1,\dots,v_r\in F$ and there is an element $z_0\in F(x_1)$ such that $$\sum_{j=1}^{r}c_j\frac{\delta v_j}{v_j}+\delta z_0\in F[x_1].$$
\end{lemma}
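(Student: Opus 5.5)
Apply Lemma \ref{Rosenlicht-lemma} twice. First with base field $F(x_1)$ and $\eta=x_2$, then with base field $F$ and $\eta=x_1$.

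\textbf{Step 1 (eliminate $x_2$).} Set $K=F(x_1)$. Then $\delta x_2\in F[x_1]\subseteq K$. Also $x_2\notin K$, because $x_1,x_2$ are algebraically independent, and $K^\delta=F^\delta$. The element $P:=\sum_j c_j\delta v_j/v_j+\delta z$ lies in $F[x_1,x_2]\subseteq K[x_2]$; let $s$ be its degree in $x_2$. Lemma \ref{Rosenlicht-lemma} over $K$ gives $v_j\in K=F(x_1)$ and $z=\sum_{k=0}^{s+1}b_kx_2^k$ with $b_k\in F(x_1)$.

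Put $g:=\delta x_2\in F[x_1]$ and compare coefficients of powers of $x_2$ in
\[
\sum_j c_j\frac{\delta v_j}{v_j}+\sum_k\bigl(\delta b_k+(k+1)b_{k+1}g\bigr)x_2^k=P .
\]
The constant term is $\sum_j c_j\delta v_j/v_j+\delta b_0+b_1 g$, and it lies in $F[x_1]$. To absorb $b_1g$ into a derivative we need to know $b_1$. Look at the top coefficients. For $k\ge 1$, $\delta b_k+(k+1)b_{k+1}g\in F[x_1]$, and $\delta b_{s+1}\in F[x_1]$.

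We now show $b_k\in F[x_1]$ by downward induction. Since $\delta x_1\in F$ and $x_1\notin F$, the Lemma applied with $r=0$ and base $F$, $\eta=x_1$, turns $\delta b_{s+1}\in F[x_1]$ into $b_{s+1}\in F[x_1]$. The same argument then gives $b_s\in F[x_1]$, and so on down to $b_1$. Hence $b_1\in F[x_1]$.

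Write $w_0:=\sum_j c_j\delta v_j/v_j+\delta b_0$, so $w_0\in F[x_1]-b_1g\subseteq F[x_1]$. Setting $z_0:=b_0\in F(x_1)$ then gives the second conclusion, with the $v_j$ so far only known to lie in $F(x_1)$.

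\textbf{Step 2 (show $v_j\in F$).} Apply Lemma \ref{Rosenlicht-lemma} over $F$ with $\eta=x_1$ to the relation $\sum c_j\delta v_j/v_j+\delta b_0\in F[x_1]$, with $v_j\in F(x_1)^*$ and $b_0\in F(x_1)$. It yields $v_j\in F$ and $b_0\in F[x_1]$. The hypothesis that the $c_j$ are $\mathbb{Q}$-linearly independent is exactly what this lemma needs.

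\textbf{Main obstacle.} The delicate point is Step 1's use of the Lemma over $K=F(x_1)$. Its hypotheses require $\delta x_2\in K$ and that $x_2$ be transcendental over $K$ with no new constants, and both hold. The other point needing care is that the $\mathbb{Q}$-linear independence of the $c_j$ is preserved: it is, since the constants are unchanged.

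If the Lemma requires $r\ge1$, the downward induction is replaced by Rosenlicht's classical fact. If $\delta b\in F[x_1]$ with $b\in F(x_1)$, then $b\in F[x_1]$, because a pole of $b$ at a monic irreducible $p\in F[x_1]$ produces a pole of $\delta b$: $\delta p$ has lower degree than $p$, hence is not divisible by $p$. Either way, the core argument is degree and pole bookkeeping in $x_2$ and then in $x_1$.
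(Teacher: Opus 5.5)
Your proof is correct and follows the paper's argument. You apply Rosenlicht's lemma over $F(x_1)$ with $\eta=x_2$, compare coefficients in $x_2$ with a downward induction putting the coefficients of $x_2^k$ ($k\ge 1$) of $z$ in $F[x_1]$, absorb $b_1\,\delta x_2$ into the right-hand side, and apply Rosenlicht's lemma again over $F$ to get $v_j\in F$. The only difference is the sub-step ``$\delta b\in F[x_1]$ and $b\in F(x_1)$ imply $b\in F[x_1]$'': you get it from Rosenlicht's lemma with no logarithmic terms (or a pole-order count), while the paper uses the Picard--Vessiot algebra $R$ and the identity $R\cap F(x_1)=F[x_1]$, so your version is more elementary.
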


\begin{proof} Since $\delta  x_2\in F[x_1]$, from Lemma \ref{Rosenlicht-lemma}, we obtain that $v_1,\dots,v_r\in F(x_1)$ and that $z\in F(x_1)[x_2].$ Write \begin{equation}\label{induction-equation}\sum_{j=1}^{r}c_j\frac{\delta v_j}{v_j}+\delta z=w_0+w_1x_2+\dots+w_mx^m_2\in F[x_1][x_2].\end{equation}
Then, 
$$z=z_0+z_1x_2+\dots+z_{m}x^{m-1}_2+z_{m+1}x^{m+1}_2\in F(x_1)[x_2],$$ where\footnote{$z_{m+1}$ may be zero.} $z_{m+1}\in C$. Now, from Equation (\ref{induction-equation}),  we obtain the following set of equations by comparing coefficients: \begin{equation}\label{system-polynomialeqns}\tag{S}\begin{aligned}\begin{cases}\delta z_{m+1}&=0\in F[x_1]\\ \delta z_m+(m+1)z_{m+1}\delta x_2&=w_m\in F[x_1]\\ \vdots& \\ \delta z_i+(i+1)z_{i+1}\delta  x_2&=w_i\in F[x_1]\\ \vdots\\ \delta z_1+2z_{2}\delta  x_2&=w_1\in F[x_1]\\ \sum_{j=1}^{r}c_j\frac{\delta v_j}{v_j}+\delta z_0+z_1\delta x_2&=w_0\in F[x_1]\end{cases}\end{aligned}\end{equation} 
Since $z_{m+1}$ is a constant, we have $\delta z_m=w_m-(m+1)z_{m+1}\delta x_2\in F[x_1]$. Consider a Picard-Vessiot extension of $F$ that contains $F(x_1,x_2)$ and denote its Picard-Vessiot $F$-algebra by $R$.  Since $F(x_1)$ itself is a Picard-Vessiot extension of $F$ with $F[x_1]$ as its Picard-Vessiot $F$-algebra, it follows that $R\cap F(x_1)=F[x_1]$. Observe that $\delta z_m\in F[x_1]\subset R$, that is, $\delta z_m$ is a solution of a linear homogeneous differential operator over $F$. Therefore,  $z_m$ must also be a solution of a linear homogeneous differential operator over $F$ and thus, $z_m\in F[x_1]$. Now, since $\delta  z_{m-1}=w_{m-1}-mz_m\delta x_2$, we obtain that $\delta z_{m-1}\in F[x_1]$ and consequently that $z_{m-1}\in F[x_1]$. Recursively, using the set of equations \eqref{system-polynomialeqns}, we conclude that $z_1\in F[x_1]$ and  that $$\sum_{j=1}^{r}c_j\frac{\delta v_j}{v_j}+\delta z_0=w_0-z_1\delta x_2\in F[x_1].$$  Finally, we apply Lemma \ref{Rosenlicht-lemma} and obtain that $v_1,\dots,v_r\in F$. \end{proof}
%%%%%%%%%%%%%%%%%%%%%%%%%%%%%%%%%%%%%%%%%%%%%%%%%%%%%%%%%%%%%%%%%%
%%%%%%%%%%%%%%%%%%%%%%%%%%%%%%%%%%%%%%%%%%%%%%%%%%%%%
	\begin{proof}[Proof of Theorem \ref{intromain}] 
     Suppose that $E$ is an elementary $\Delta$-extension of $F(Y)$ such that $E^\Delta \subsetneqq E^\delta .$ Then, from \cite[Theorem]{MR704611}, there exist nonzero elements $v_1,\dots,v_r$ algebraic over $F(Y)$, an element $v$ algebraic over $F(Y)$ and constants $c_1\dots,c_r$ such that
		\begin{equation}\label{eqn-ineqn}
			\sum_{j=1}^{r}c_j\frac{\delta v_j}{v_j}+\delta v=0 \ \ \ \text{and} \ \ \ \sum_{j=1}^{r}c_j\frac{\delta_i v_j}{v_j}+\delta_i v\neq0
		\end{equation}
		for some $i$, $0\leq i\leq r$.  We shall further assume that the constants $c_1,\dots,c_r$ are linearly independent over the rational numbers $\mathbb Q$ (See \cite[p.158]{Ros68}).
We claim that the nonzero elements $v_1,\dots, v_r$, appearing in the above equation, can be assumed to be in $F$ and that $v$ not in $F$ but belongs to $F(Y)$. This is accomplished in three steps.

\emph{Step 1}: \emph{The elements $v_1,\dots, v_n,v$ can not be simultaneously algebraic over $F$}.

Suppose for a contradiction that  $v_1,\dots, v_r,v$ are algebraic over $F.$ Note that any derivation on $F$ extends uniquely to its algebraic closure. Since for all $f\in F$ and for all $i$, $1\leq i\leq r$,  we have $\delta_i f=0$, it follows that $\delta_i u=0$ for all $u\in \{v,v_1,\dots,v_r\}.$ Therefore $\sum_{j=1}^{r}c_j\frac{\delta_i v_j}{v_j}+ \delta_i v=0$ for all $i$, $1\leq i\leq r$. From the definition of $\delta $, we now have $\delta  u=\delta_0 u$ for all $u\in \{v,v_1,\dots,v_r\}.$ Therefore, $0=\sum_{j=1}^{r}c_j\frac{\delta v_j}{v_j}+\delta v$  $=\sum_{j=1}^{r}c_j\frac{\delta_0v_j}{v_j}+ \delta_0v=0.$ This contradicts the existence of a $\delta_i\in \Delta$ guaranteed by \eqref{eqn-ineqn}. Hence, the claim is proved.

Let $l\geq 1$ be the smallest integer such that there are elements $v, v_1,\dots, v_r$ and constants $c_1,\dots, c_r$ satisfying \eqref{eqn-ineqn} and that all the elements $v, v_1,\dots, v_r$ are algebraic over $F(y_1,\dots,y_{l})$. Let $F_0:=F$ and for $1\leq i\leq l$, $F_i=F_{i-1}(y_i)$.    

\emph{Step 2}: \emph{There are nonzero elements $v_1,\dots, v_r\in F_{l-1}$ and an element $v=cy_l+z$, where $c$ is a nonzero constant and $z\in F_{l-1}$, satisfying \eqref{eqn-ineqn}.}

We know that there are elements $v_1,\dots, v_r,v$ algebraic over $F_{l-1}(y_l)$ such that 
$$\sum_{j=1}^{r}c_j\frac{\delta v_j}{v_j}+\delta v=0.$$
Apply \cite[Theorem 2]{Ros76} to obtain that $v_1,\dots, v_r$ are algebraic over $F_{l-1}$ and that there is constant $c$ and an element $z$ algebraic over $F(y_1,\dots,y_{l-1})$ such that $v=cy_l+z$. For the choice of $l$, it is clear that $c\neq 0$. Let $N$ be finite Galois extension of $F(y_1,\dots, y_{l-1})$ containing $v_1,\dots,v_r,z$. Since $y_l$ is transcendental over $F_{l-1}$, the natural restriction map is an isomorphism between the Galois groups $\mathrm{Gal}(NF_l|F_l)$ and $\mathrm{Gal}(N|F_{l-1})$. Let $$\mathrm{Tr}(v):=\sum_{\sigma\in \mathrm {Gal}(NF_l|F_l)}\sigma (v)\, ,\qquad\qquad\mathrm{Nr}(v):=\prod_{\sigma\in \mathrm{Gal}(NF_l|F_l)}\sigma(v)$$ and $m$ be the order of the group $\mathrm{Gal}(N|F_{l-1})$. Then, for each $1\leq i\leq r$, $\mathrm{Tr}(v_i)\in F_{l-1}$, $\mathrm{Nr}(v_i)\in F_{l-1}$ and $$\mathrm{Tr}(v)=mcy_l+\mathrm{Tr}(z).$$ Since $c\neq 0$ and $\mathrm{Tr}(z)\in F_{l-1}$,  $\mathrm{Tr}(v)$ is a nonzero element of $F_l$ and that $\mathrm{Tr}(v)\notin F_{l-1}$. Observe that $$\sum^r_{i=1}c_i\frac{\delta \mathrm{Nr}(v_i)}{\mathrm{Nr}(v_i)}+\delta  \mathrm{Tr}(v)=0.$$
Since $\delta_l\mathrm{Tr}(v)=\delta_l(mcy_l)+\delta_l\mathrm{Tr}(z)=mc\neq 0$ and that $\delta_l\mathrm{Nr}(v_i)=0$, we also have $$\sum_{j=1}^{r}c_j\frac{\delta_l \mathrm{Nr}(v_j)}{v_j}+\delta_l \mathrm{Tr}(v)\neq0.$$ This completes the proof of \emph{Step 2}.

\emph{Step 3}: \emph{Elements $v_1,\dots, v_r$ in \eqref{eqn-ineqn} can be assumed to be in $F$.}

From \emph{Step 2}, we know that there are nonzero elements $v_1,\dots, v_r\in F_{l-1}$ and an element $v=cy_l+z$, where $c$ is some nonzero constant and $z\in F_{l-1}$, satisfying \eqref{eqn-ineqn}. We shall now show that $v_1,\dots,v_r$ indeed belong to $F$. Since $\delta z=\delta v-c\delta y_l$ and that $\delta y_l\in F[y_1,\dots,y_{l-1}]$, we have \begin{equation}\label{inductionstep}
			\sum_{j=1}^{r}c_j\frac{\delta v_j}{v_j}+\delta z\in F[y_{1},\dots,y_{l-2},y_{l-1}], 
		\end{equation}
where $v_1,\dots, v_r,z\in F_{l-1}$. Now, a repeated application of Lemma \ref{Induction-step3} will show that $v_1,\dots,v_r\in F$. This concludes the proof of \emph{Step 3} and hence the claim. 

Now, we shall return to proof of the Theorem. We now have an element $v\in F(Y)$, $v\notin F$, nonzero elements $v_1,\dots,v_r\in F$ and $\mathbb Q$-linearly independent constants $c_1,\dots, c_r$ such that  $$\delta v=\sum_{j=1}^{r}c_j\frac{\delta v_j}{v_j}.$$ 
This clearly implies that $v$ is an elementary $\delta $-integral of $F$ and concludes the proof of the implication part of the theorem.
    
Now, to prove the converse of the theorem, suppose that $F(Y)$ contains an elementary $\delta $-integral $\eta$ which is not in $F.$ Choose the smallest integer $m\geq 1$, constants $c_1,\dots,c_m$, nonzero elements $v_1,\dots, v_m\in F$ and an element $v\in F$ such that \begin{equation}\label{D-logexpression}\delta \eta=\sum^m_{i=1} c_i\frac{\delta v_i}{v_i}+\delta v.\end{equation} Note that $\delta v_i=\delta_0v_i$ and $\delta v=\delta_0 v.$ 
Consider the rational function field $F(Y)(z_1,\dots,z_m)$ of $F(Y)$ and extend the derivations of $\Delta$ using the rule \begin{equation}\label{definition-DEltaextn}\delta_i z_j= \frac{\delta_i v_j}{v_j}\,\, \text{for every } \delta_i\in \Delta\,\, \text{and for every } j,  1\leq j\leq m.\end{equation} Note that for each $v_j$ and for each $i=1,\dots, n,$ we have $\delta_i v_j=0$ as $v_j\in F$ and that $\delta  z_j=\delta v_j/v_j$. 

Suppose that $x\in F(Y)(z_1,\dots,z_m)$ is a $\Delta$-constant. Without loss of generality, we may assume that $F^\delta$ is the field of constants of the $\Delta$-field $F(Y)(z_1,\dots,z_{m-1}).$ Since for any $\delta_i\in \Delta$,  we have $\delta_i x=0$, it can be easily  seen (Use \cite[Proposition 2.2(b)]{KS19} for every $\delta_i\in \Delta$) that  there is an element $z\in F(Y)(z_1,\dots,z_{m-1})$  such that for every $\delta_i\in \Delta$, $$\delta_i z_m=\frac{\delta_i v_m}{v_m}=\delta_i z.$$ Then, from \cite[Section 2]{Kol68}, there are constants $e_1,\dots.e_{m-1}\in F^\delta$, not all zero, and an element $w\in F(Y)$ such that $$z=\sum^{m-1}_{i=1}e_iz_i+w.$$ Now, for any $\delta_i \in \Delta$, we have $$\frac{\delta_i v_m}{v_m}=\sum^{m-1}_{i=1}e_i\frac{\delta_i v_i}{v_i}+\delta_i w.$$ Therefore, from \eqref{D-logexpression}, we obtain $$\delta \eta=\sum^{m-1}_{i=1} (c_i+c_me_i)\frac{\delta v_i}{v_i}+\delta (v+c_mw).$$ This contradicts the choice of $m$. Thus, we have shown that $F(Y)(z_1,\dots,z_m)$ is a $\Delta$-elementary extension of $F(Y).$ Finally, we observe that  $\delta $ is also a derivation on $F(Y)(z_1,\dots,z_m)$ and that $$\delta z_i=\frac{\delta v_i}{v_i}.$$ Therefore, $$\delta \left(\eta-\sum^m_{i=1}c_iz_i- v\right)=0.$$
Thus, $\eta-\sum^m_{i=1}c_iz_i- v$ is an elementary first $\delta $-integral of the system \eqref{rationalsystem}. Since there is at least one of the constants $c_1,\dots,c_m$ is nonzero, $\eta-\sum^m_{i=1}c_iz_i- v\notin F$.\end{proof}

\begin{remark}\label{Singersresult}
    In \cite[Corollary 2]{MR704611}, it is shown that if for each $i=1,2,\dots, n$,  $\delta y_i=:u_i\in F$ and that the system $\delta y_1=u_1,\dots, \delta y_n=u_n$ has an elementary first $\delta$-integral then, there are constants $c_1,\dots, c_n, d_1,d_2,\dots,d_m$ and nonzero elements $v_1,\dots,v_m\in F$ and an element $v\in F$ such that  \begin{equation}\label{SingerCorollary}\sum^n_{i=1}c_iu_i=\sum^m_{j=1}d_j\frac{\delta v_j}{v_j}+\delta v.\end{equation}
    This result can be recovered from Theorem \ref{intromain}, in the case when $F(Y)^\delta=F^\delta$, as follows. From Theorem \ref{intromain}, we know that there is an elementary $\delta$-integral $u\in F(Y)$ of $F$ with $u\notin F$. That is, there are constants $d_1,\dots,d_m$, nonzero elements $v_1,\dots,v_m$ and an element $v\in F$ such that $\delta u=\sum^m_{j=1}d_j\delta v_j/v_j+ \delta v$. Apply the Kolchin-Ostrowki Theorem \cite[Section 2]{Kol68} to first obtain that $u=g+\sum^n_{i=1}c_iy_i$, for some constants $c_1,\dots,c_n$ and $g\in F$. Now, taking derivatives, we obtain an equation of form  \eqref{SingerCorollary}. 
\end{remark}

We conclude this section with the following proposition, which describes the elementary $\delta$-integrals for certain $\delta$-fields $F(Y)$. 

\begin{proposition}\label{iteratedintegrals-firstintegrals} Suppose that there is an element $x\in F$ such that $\delta x=1$ and that  there is a iterated integral $\eta\in F(Y)$ of $F$ with $\delta ^r\eta\in F,$ $\delta ^{r-1}\eta\notin F$, $F(Y)=F\langle \eta\rangle$ and $F(Y)^\delta =F^\delta$.
If there is an elementary $\Delta$-extension $E$ of $F(Y)$ such that  $E^\Delta \subsetneqq E^{\delta }$ then, there are polynomials $p_0,p_1,\dots,p_{r-1}\in F^\delta[x]$ with each $p_i$ having degree $\leq i$ such that $$\sum_{i=0}^{r-1}p_i\delta ^{i}\eta\notin F$$ is an elementary $\delta $-integral of $F$. 
	\end{proposition}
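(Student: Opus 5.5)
The plan is to reduce Proposition \ref{iteratedintegrals-firstintegrals} to Theorem \ref{intromain}, and then to identify the resulting integral concretely using Proposition \ref{integral_extension} and the Kolchin--Ostrowski theorem.

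\emph{Step 1: obtain an integral from the theorem.} The hypotheses say that $F(Y)$ carries a derivation $\delta$ of the form \eqref{ADE-Derivation}, that $F(Y)^\delta=F^\delta$, and that there is an elementary $\Delta$-extension $E$ with $E^\Delta\subsetneqq E^\delta$. The latter means the system admits an elementary first $\delta$-integral. So the forward implication of Theorem \ref{intromain} gives an elementary $\delta$-integral $u\in F(Y)$ of $F$ with $u\notin F$. In particular $\delta u\in F$, and
$$\delta u=\sum_j d_j\frac{\delta v_j}{v_j}+\delta v_0, \qquad v_j,v_0\in F.$$

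\emph{Step 2: describe $F(Y)$ by integrals.} Since $F(Y)=F\langle\eta\rangle$ and $\delta x=1$, Proposition \ref{integral_extension} gives $F(Y)=F(w_1,\dots,w_r)$, where the $w_{j+1}$ are given by \eqref{w_idescription}. Each $w_{j+1}$ satisfies $\delta w_{j+1}=(-1)^jx^jf/j!\in F$. Thus $F(Y)$ is generated over $F$ by integrals of $F$, and its field of constants is $F^\delta$.

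\emph{Step 3: apply Kolchin--Ostrowski.} Choose a subset of $\{w_1,\dots,w_r\}$ that is a transcendence basis of $F(Y)$ over $F$. Then $F(Y)$ is algebraic over the purely transcendental field this subset generates. Applying the Kolchin--Ostrowski theorem \cite[Section 2]{Kol68} to $u$, which satisfies $\delta u\in F$, yields constants $c_1,\dots,c_r\in F^\delta$ (zero off the chosen subset) and $g\in F$ with
$$u=g+\sum_{j=1}^r c_jw_j.$$
This is the step I expect to need the most care: one has to check that the algebraic part of $F(Y)$ over that purely transcendental subfield contributes nothing beyond $F$. This should follow because $F(Y)^\delta=F^\delta$ and the Kolchin--Ostrowski/Rosenlicht argument (as in Remark \ref{Singersresult}) applies verbatim.

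\emph{Step 4: regroup in powers of $\delta$ and conclude.} Substitute \eqref{w_idescription} into $\sum_j c_jw_j$ and collect the coefficient of $\delta^k\eta$ for $0\le k\le r-1$. The term $\delta^k\eta$ appears in $w_{j+1}$ exactly when $i=j+1-r+k$ with $0\le i\le j$, that is, for $0\le i\le k$. Hence
$$u-g=\sum_{k=0}^{r-1}p_k\,\delta^k\eta, \qquad p_k=\sum_{i=0}^{k}(-1)^i c_{r-k+i}\frac{x^i}{i!}\in F^\delta[x],$$
and each $p_k$ has degree $\le k$. Since $g\in F$, we have
$$\delta(u-g)=\sum_j d_j\frac{\delta v_j}{v_j}+\delta(v_0-g).$$
So $u-g$ is still an elementary $\delta$-integral of $F$, and $u-g\notin F$ because $u\notin F$. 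This gives the required polynomials $p_0,\dots,p_{r-1}$.
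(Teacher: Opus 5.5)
Your proposal is correct and follows the paper's proof step for step. Theorem \ref{intromain} supplies $u$, Proposition \ref{integral_extension} gives $F(Y)=F(w_1,\dots,w_r)$, Kolchin--Ostrowski gives $u-g=\sum c_jw_j$, and regrouping by $\delta^k\eta$ produces the $p_k$; your explicit formula $p_k=\sum_{i=0}^{k}(-1)^ic_{r-k+i}x^i/i!$ is a welcome addition. The worry you flag in Step 3 is resolved by Kolchin--Ostrowski itself: $u$ is algebraic over $F(w_{j_1},\dots,w_{j_s})$, so some nontrivial $cu+\sum_i c_iw_{j_i}$ lies in $F$, and $c\neq 0$ because the $w_{j_i}$ are algebraically independent.
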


Before we proceed to state the proposition, we note the following: If $f_0,f_1,\dots,f_{n-1}\in F$, that is, $\delta y_i\in F$ for each $1\leq i\leq n$, $F(Y)^\delta=F^\delta$ and that $F$ has an element $x$ such that $\delta x=1$ then there is an iterated integral $\eta\in F(Y)$ of $F$ such that $F(Y)=F\langle \eta\rangle$ (See \cite[Theorem 2.3]{SSS25}). Thus, the above proposition is applicable to all such integral extensions of $F$.

\begin{proof}[Proof of Proposition \ref{iteratedintegrals-firstintegrals}] 
By Lemma \ref{integral_extension}, $F\langle \eta\rangle =F(w_1,\dots,w_r)$ where for each $j=0,1,\dots, r-1,$ \begin{align}\label{expression_of_integrals}
         w_{j+1}&= \sum^j_{i=0}(-1)^i\frac{x^i}{i!}\delta ^{r-(j+1-i)}\eta\notag \\ &= \sum^{r-1}_{i=r-(j+1)}(-1)^{i+j+1-r}\frac{x^{i+j+1-r}}{(i+j+1-r)!}\delta ^{i}\eta . \end{align}
By Theorem \ref{intromain}, there exists an elementary $\delta $-integral $u\in F\langle \eta\rangle$ of $F$ such that $u\notin F$. Using  \cite[Section 2]{Kol68}, we find  constants $c_1,\dots, c_r$ and an element $u_0\in F$ such that \begin{equation}\label{Kol-Ost}\tilde{u}=\sum^r_{j=1}c_jw_j\, ,\end{equation} where $\tilde{u}=u-u_0$. Note that $\tilde{u}$ is also an elementary $\delta $-integral of $F$. Now, from Equation \eqref{expression_of_integrals}, we immediately obtain \begin{equation}\label{elemint-intermsofderivatives}\tilde{u}=\sum_{i=0}^{r-1}p_i\delta ^i\eta,\end{equation} where for each $i=0,1,\dots,r-1,$ the polynomial $p_i\in F^\delta[x]$ is obtained by gathering the coefficients of $\delta ^i\eta.$ Finally, in Equation \eqref{expression_of_integrals}, $i+j+1-r\leq i$ as $j\leq r-1.$ Therefore, each polynomial $p_i$ is of degree at most $i.$\end{proof}

%%%%%%%%%%%%%%%%%%%%%%%%%%%%%%%%%%%%%%%%%%%%%%%%%%%%%%%%
%%%%%%%%%%%%%%%%%%%%%%%%%%%%%%%%%%%%%%%%%%%%%%%%%%%%%%%%
%%%%%%%%%%%%%%%%%%%%%%%%%%%%%%%%%%%%%%%%%%%%%%%%%%%%%%%%
%%%%%%%%%%%%%%%%%%%%%%%%%%%%%%%%%%%%%%%%%%%%%%%%%%%%%%%%%%%%%%%%%%%%%%%%%%%%%%%%%%%%%%%%%%%%%%%%%%%%%%%%%%%%%%%%
%%%%%%%%%%%%%%%%%%%%%%%%%%%%%%%%%%%%%%%%%%%%%%%%%%%%%%%%
%%%%%%%%%%%%%%%%%%%%%%%%%%%%%%%%%%%%%%%%%%%%%%%%%%%%%%%%
%%%%%%%%%%%%%%%%%%%%%%%%%%%%%%%%%%%%%%%%%%%%%%%%%%%%%%%%
\section{Applications}
%%%%%%%%%%%%%%%%%%%%%%%%%%%%%%%%%%%%%%%%%%%%%%%%%%%%%%%%
%%%%%%%%%%%%%%%%%%%%%%%%%%%%%%%%%%%%%%%%%%%%%%%%%%%%%%%%
%%%%%%%%%%%%%%%%%%%%%%%%%%%%%%%%%%%%%%%%%%%%%%%%%%%%%%%%
%%%%%%%%%%%%%%%%%%%%%%%%%%%%%%%%%%%%%%%%%%%%%%%%%%%%%%%%%%%%%%%%%%%%%%%%%%%%%%%%%%%%%%%%%%%%%%%%%%%%%%%%%%%%%%%%
%%%%%%%%%%%%%%%%%%%%%%%%%%%%%%%%%%%%%%%%%%%%%%%%%%%%%%%%
%%%%%%%%%%%%%%%%%%%%%%%%%%%%%%%%%%%%%%%%%%%%%%%%%%%%%%%%
%%%%%%%%%%%%%%%%%%%%%%%%%%%%%%%%%%%%%%%%%%%%%%%%%%%%%%%%
In this section, we provide a $\delta$-field $F$ and a $\delta$-extension $F(Y)$ which does not contain any elementary $\delta$-integrals. This in turn implies, thanks to Theorem \ref{intromain}, that the system has no elementary first $\delta$-integrals. 

Consider the rational function field $\mathbb C(x)$ over the field of complex numbers $\mathbb C$ with the derivation $\delta :=\frac{d}{dx}$. Let $h:=(x-a_1)(x-a_2)(x-a_3)$,
where $a_1,a_2,a_3$ are distinct complex numbers and $z$ be an algebraic element over $\mathbb C(x)$ satisfying $z^2 =h$. Let $F=\mathbb C(x,z).$ It is well-known that the rational field $F(\eta)$ with the derivation $\delta \eta=1/z$ has $\mathbb C$ as its field of constants and that the element $\eta$ is called an \emph{elliptic integral}. Furthermore, $\eta$ is not an elementary $\delta$-integral of $F$, that is, $\eta$ does not belong to any $\delta$-elementary extension of $F$ (See \cite[p.36]{Rit48}).

\begin{proposition}\label{elliptic-intelliptic}
For every rational function $g\in \mathbb C(x)$ having at least one simple pole, the system \begin{equation}\label{system-elliptic}
		\tag{E}
	\begin{cases}	\begin{aligned} 
			\delta \zeta&=g\eta\\
    \delta \eta&=\frac{1}{z}
		\end{aligned}\end{cases}
	\end{equation}
 has no rational first $\delta$-integral  and no elementary first $\delta$-integrals. 
 \end{proposition}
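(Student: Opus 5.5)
The plan is to put the system \eqref{system-elliptic} into the shape of Theorem \ref{intromain}, with $F=\mathbb C(x,z)$, $y_1=\eta$ and $y_2=\zeta$. Then $\delta y_1=1/z\in F$ and $\delta y_2=g y_1\in F[y_1]$, so the system is of the form \eqref{rationalsystem}. The proof then has two parts. First I show that $F(\eta,\zeta)$ is a rational function field with $F(\eta,\zeta)^\delta=\mathbb C$, which says there is no rational first $\delta$-integral. Second I show that $F(\eta,\zeta)$ contains no elementary $\delta$-integral of $F$ outside $F$. Theorem \ref{intromain} then rules out elementary first $\delta$-integrals.

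For the first part, it is known that $F(\eta)$ has constants $\mathbb C$. Suppose $\zeta$ were algebraic over $F(\eta)$, or that $F(\eta,\zeta)$ had new constants. By the Kolchin--Ostrowski theorem (or \cite[Proposition 2.2]{KS19}) there would be $w\in F(\eta)$ with $\delta w=g\eta$. Lemma \ref{Rosenlicht-lemma}, applied with $r=0$, gives $w\in F[\eta]$ of degree at most $2$. Write $w=a\eta^2+b\eta+c$ and compare coefficients:
\[
\delta a=0,\qquad \frac{2a}{z}+\delta b=g,\qquad \frac{b}{z}+\delta c=0 .
\]
Since $g\in\mathbb C(x)$ and $2a/z\notin\mathbb C(x)$ when $a\neq0$, I expect to force $a=0$. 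One way is to apply the automorphism $z\mapsto -z$ of $F$ over $\mathbb C(x)$, which commutes with $\delta$. Averaging $b$ under it then lets me assume $b\in\mathbb C(x)$.

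With $a=0$ we have $\delta b=g$, which is impossible if $g$ has a simple pole. Indeed, the derivative of an element of $\mathbb C(x)$ has residue zero at every point. More carefully, for $b\in F$ one passes to local parameters on the curve $z^2=h$. A simple pole of $g$ at a point not among the $a_i$ gives a simple pole on the curve, and no derivative has one. If the pole is at some $a_i$, the local parameter is $t$ with $x-a_i=t^2$, and residues must be checked there. This settles the first part.

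For the second part, suppose $u\in F(\eta,\zeta)\setminus F$ is an elementary $\delta$-integral. By Lemma \ref{Rosenlicht-lemma} and Lemma \ref{Induction-step3}, or directly by the Kolchin--Ostrowski theorem as in Remark \ref{Singersresult}, $u$ lies in $F[\eta,\zeta]$ with $\delta u\in F$. The Kolchin--Ostrowski theorem gives $u=c_1\eta+c_2\zeta+u_0$ with $u_0\in F(\eta)$ once $\zeta$ is involved. Comparing $\zeta$-degrees and then $\eta$-degrees shows $c_2=0$: otherwise $\delta u=c_2g\eta+\dots\in F$ forces $u_0\in F[\eta]$ of degree $2$ with leading coefficient as above, and this is the same contradiction as in the first part.

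So $u=c_1\eta+u_0$ with $u_0\in F$ and $c_1\neq0$. Then $\eta$ itself satisfies
\[
\delta\eta=\sum_j d_j\frac{\delta v_j}{v_j}+\delta v,\qquad v_j,v\in F,
\]
so $\eta$ lies in an elementary extension of $F$. This contradicts the cited fact \cite[p.36]{Rit48} that the elliptic integral is not elementary over $F$.

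The main obstacle is the residue argument that $\delta b=g$ has no solution $b\in F$ when $g$ has a simple pole, especially when the pole sits at a branch point $a_i$ or interacts with the $2a/z$ term. I would first try the symmetrization under $z\mapsto-z$ to reduce to $b\in\mathbb C(x)$, and then use the fact that residues of exact differentials vanish.
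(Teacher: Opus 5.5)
Your proposal is correct, and its skeleton matches the paper's. You reduce to Theorem \ref{intromain} with $y_1=\eta$, $y_2=\zeta$. You then use \cite[Proposition 2.2]{KS19} and Lemma \ref{Rosenlicht-lemma} to reduce the absence of new constants to the unsolvability of $2a/z+\delta b=g$ with $a\in\mathbb C$, $b\in F$. In the second part you use Kolchin--Ostrowski to write $u=c\zeta+u_0$ with $u_0\in F(\eta)$, hit the same equation through the $\eta$-coefficient, and finish with Ritt's theorem. The paper applies Ritt first (to get $u\notin F(\eta)$) and then treats the $\zeta$-coefficient; your reversed order is immaterial.

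The genuine difference is how the key equation is shown unsolvable. The paper rewrites it as $\delta(2\alpha_2\eta+\alpha_1)=g$. It argues that, because $g$ has a simple pole, the antiderivative needs a logarithm, so $\alpha_2\neq0$ and $\eta$ would be elementary, contradicting Ritt. The claim that $g$ has no antiderivative in $F=\mathbb C(x,z)$, as opposed to $\mathbb C(x)$, is asserted there without proof.

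Your involution $\sigma\colon z\mapsto -z$ commutes with $\delta$, fixes $g$ and sends $2a/z$ to $-2a/z$. Averaging therefore gives $\delta\bar b=g$ with $\bar b=\tfrac12(b+\sigma b)\in\mathbb C(x)$, which the nonzero residue of $g$ at its simple pole forbids. This handles $a=0$ and $a\neq0$ at once, so you do not need to force $a=0$ first. It also makes the local-parameter analysis at the branch points $a_i$ unnecessary and avoids Ritt in the first part. Ritt's theorem is then used only once, at the very end, where it is genuinely needed.

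The residue argument, yours and the paper's alike, requires the simple pole to lie in $\mathbb C$. For $h=x^3-1$ and $g=x$, whose only simple pole is at $\infty$, one checks $\delta\bigl(\zeta-\tfrac{x^2}{2}\eta+\tfrac{z}{3}\bigr)=0$. So the statement must be read with "simple pole in $\mathbb C$."

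Two cosmetic points. The term $c_1\eta$ in $u=c_1\eta+c_2\zeta+u_0$ is redundant, since $u_0\in F(\eta)$. The intermediate claim $u\in F[\eta,\zeta]$ is never used.
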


\begin{proof} 
We first consider the rational function field $F(\eta, \zeta)$ with the derivation $\delta $ induced by the system \eqref{system-elliptic}. We shall first show that the system has no rational first $\delta$-integrals, that is, $F(\eta,\zeta)^{\delta }=\mathbb C.$ We have already observed that $F(\eta)^{\delta }=\mathbb C$.

Suppose that  there is a constant in $F(\eta,\zeta)$ which is not in $\mathbb C.$ Then from \cite[Proposition 2.2(b)]{KS19}, it follows that there is a $y\in F(\eta)$ such that $\delta y=g\eta.$
Apply Lemma \ref{Rosenlicht-lemma} and conclude that $y=\alpha_2\eta^2+\alpha_1\eta+\alpha_0\in F[\eta]$, $\alpha_2\in \mathbb C$, $\alpha_1, \alpha_0\in F$. Comparing the coefficients of $\eta$ in $\delta y=g\eta$, we have \begin{equation}\label{weakorthogonality-1}\frac{2\alpha_2}{z}+\delta \alpha_1=g\end{equation} 

Therefore, $\delta (2\alpha_2 \eta+\alpha_1)=g$. Since $g\in \mathbb C(x)$ has a simple pole, $2\alpha_2 \eta+\alpha_1$ must belong to a transcendental $\delta$-elementary extension of $F$. Thus, $\alpha_2$ must be nonzero and that $\eta$ is also an elementary $\delta$-integral of $F$, a contradiction. Hence $F(\eta,\zeta)^{\delta }=\mathbb C$.

Now we shall show that $F(\eta,\zeta)$ contains no elementary $\delta$-integrals of $F$. Suppose for a contradiction that there is an element $y\in F(\eta, \zeta)$ and $y\notin F$ is an elementary $\delta$-integral of $F$. Since $\eta$ is not an elementary $\delta$-integral of $F$, we have $y\notin F(\eta)$. Note that $F(\eta,y)=$ $F(\eta, \zeta)$ (See \cite[Corollary 2.1.1]{srin-2010-IAE}).  Therefore, from \cite[Section 2]{Kol68}, there is a nonzero constant $c$ such that $c\zeta+y\in F(\eta).$ Since $\delta (c\zeta+y)=cg\eta+\delta y\in F[\eta],$ by Lemma \ref{Rosenlicht-lemma} 
$$c\zeta+y=\beta_0+\beta_1\eta+\beta_2\eta^2 $$
for some $\beta_2\in \mathbb C$ and $\beta_0,\beta_1\in F.$ Taking derivatives, 
$$cg\eta+\delta y=\delta \beta_0+\frac{\beta_1}{z}+\left(\delta \beta_1+\frac{2\beta_2}{z} \right)\eta.$$
Therefore, $$g=\delta \left(
\frac{\beta_1}{c}\right)+\frac{2\beta_2}{cz}=\delta \left(\frac{\beta_1+2\beta_2\eta}{c}\right).$$ But, as noted earlier in the paragraph succeeding \eqref{weakorthogonality-1}, no such $\beta_1, \beta_2$ exist in $F$ and we have arrived at a contradiction. 
This completes the proof of the proposition.\end{proof}

%%%%%%%%%%%%%%%%%%%%%%%%%%%%%%%%%%%%%%%%%%%%%%%%%%%%%%%%
%%%%%%%%%%%%%%%%%%%%%%%%%%%%%%%%%%%%%%%%%%%%%%%%%%%%%%%%
%%%%%%%%%%%%%%%%%%%%%%%%%%%%%%%%%%%%%%%%%%%%%%%%%%%%%%%%
%%%%%%%%%%%%%%%%%%%%%%%%%%%%%%%%%%%%%%%%%%%%%%%%%%%%%%%%

    \bibliographystyle{unsrt}
	\bibliography{SS}
	
\end{document}